\let\abs=\envert
\let\norm=\enVert
\newtheorem{teor}{Theorem}[section]
\newtheorem{corol}[teor]{Corollary}
\theoremstyle{definition}
\newtheorem{defi}[teor]{Definition}
\newtheorem{lema}[teor]{Lemma}
\numberwithin{equation}{section}
\begin{document}
\title[Absolute divergence in the infinite torus]{On the absolute divergence of Fourier series in the infinite dimensional torus}
\author[E. Fern\'andez]{Emilio Fern\'{a}ndez} 
\address{Departamento de Matem\'{a}ticas y Computaci\'{o}n, 
Universidad de La Rioja, c/ Madre de Dios, 53, 26006 Logro\~no, Spain.}
\email{emfernan@unirioja.es}

\author[L. Roncal]{Luz Roncal}
\address{BCAM -- Basque Center for Applied Mathematics,
Alameda de Mazarredo, 14
E-48009 Bilbao, Basque Country, Spain.}
\email{lroncal@bcamath.org}


\keywords{Infinite dimensional torus, Fourier series, absolute divergence, quadratic forms}

\subjclass[2010]{Primary: 42B05. Secondary: 43A50, 46G99}

\begin{abstract}
We present some simple counterexamples, based on quadratic forms in infinitely many variables, showing that the implication
$f\in C^{(\infty}(\mathbb{T}^\omega)\Longrightarrow\sum_{\bar{p}\in\mathbb{Z}^\infty}|\widehat{f}(\bar{p})|<\infty$ is false. There are functions of the class $C^{(\infty}(\mathbb{T}^\omega)$ (depending on an infinite number of variables) whose Fourier series diverges absolutely. This fact establishes a significant difference from the finite dimensional case.
\end{abstract}

\maketitle
\thispagestyle{empty}

\section{Introduction}
The following result establishes a sufficient condition of smoothness on a function defined on the $n$-dimensional torus $\mathbb{T}^n$ ($n\ge 1$) for the absolute convergence of its Fourier series:
\begin{teor} \emph{(\cite[p. 249]{steinweiss}.)}
If $f\in C^{(k}(\mathbb{T}^n)$, $k>n/2$, then 
$$
\sum_{m\in\mathbb{Z}^n}\big|\widehat{f}(m)\big|<\infty.
$$
\end{teor}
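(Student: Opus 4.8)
The plan is to combine the Cauchy--Schwarz inequality with Parseval's identity, so that the smoothness hypothesis enters through an $L^2$ bound on the derivatives of $f$ while the dimensional threshold $k>n/2$ enters through the convergence of a comparison series. The starting point is to insert the weight $(1+\abs{m}^2)^{k/2}$ together with its reciprocal, writing
$$
\sum_{m\in\mathbb{Z}^n}\abs{\widehat{f}(m)}
=\sum_{m\in\mathbb{Z}^n}\abs{\widehat{f}(m)}\,(1+\abs{m}^2)^{k/2}\,(1+\abs{m}^2)^{-k/2},
$$
and then to apply Cauchy--Schwarz to bound this by the product of
$$
\Big(\sum_{m\in\mathbb{Z}^n}\abs{\widehat{f}(m)}^2\,(1+\abs{m}^2)^{k}\Big)^{1/2}
\quad\text{and}\quad
\Big(\sum_{m\in\mathbb{Z}^n}(1+\abs{m}^2)^{-k}\Big)^{1/2}.
$$

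The first factor is where I would use the regularity of $f$. Expanding $(1+\abs{m}^2)^{k}$ by the multinomial theorem as a nonnegative combination $\sum_{\abs{\beta}\le k}c_\beta\,\abs{m^\beta}^2$ of the monomials $\abs{m^\beta}^2=m_1^{2\beta_1}\cdots m_n^{2\beta_n}$, and recalling that integration by parts gives $\widehat{D^\beta f}(m)=(im)^\beta\widehat{f}(m)$, Parseval's identity turns each inner sum into a squared $L^2$ norm:
$$
\sum_{m\in\mathbb{Z}^n}\abs{\widehat{f}(m)}^2\,(1+\abs{m}^2)^{k}
=\sum_{\abs{\beta}\le k}c_\beta\,\norm{D^\beta f}_{L^2(\mathbb{T}^n)}^2.
$$
Since $f\in C^{(k}(\mathbb{T}^n)$, every derivative $D^\beta f$ with $\abs{\beta}\le k$ is continuous on the compact torus, hence bounded and square-integrable, so this expression is finite.

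The second factor is where the hypothesis $k>n/2$ becomes decisive, and it is the one step that requires genuine care. I would estimate $\sum_{m}(1+\abs{m}^2)^{-k}$ by comparison with the integral $\int_{\mathbb{R}^n}(1+\abs{x}^2)^{-k}\,dx$; passing to polar coordinates reduces the convergence question at infinity to that of $\int^{\infty}r^{\,n-1-2k}\,dr$, which is finite precisely when $2k>n$, that is, when $k>n/2$. Combining the two bounds yields the asserted absolute convergence. The main obstacle is thus not the smoothness side but the sharp book-keeping of the lattice sum: one must confirm that the summability threshold of $(1+\abs{m}^2)^{-k}$ over $\mathbb{Z}^n$ coincides exactly with the stated condition on $k$, and recognize that it is precisely this geometric fact --- the rate at which lattice points accumulate in dilating balls --- that forces the dependence on the dimension $n$ and hence marks the contrast with the infinite dimensional setting studied in this paper.
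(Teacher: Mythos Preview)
The paper does not supply its own proof of this theorem; it is quoted as a known result with a reference to Stein--Weiss. Your argument is correct and is precisely the standard proof given there: Cauchy--Schwarz with the weight $(1+\abs{m}^2)^{k/2}$, Parseval applied to the derivatives $D^\beta f$, and the convergence of $\sum_{m\in\mathbb{Z}^n}(1+\abs{m}^2)^{-k}$ under the hypothesis $k>n/2$.
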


When $f\in C^{(\infty}(\mathbb{T}^n)$, more conclusive results hold, for example (see \cite[Th. 7.25, p. 202]{rudinfa}):
\begin{teor}
\label{teor:rudinfa}
If $f\in C^{(\infty}(\mathbb{T}^n)$, then
$$\sum_{m\in\mathbb{Z}^n}(1+|m|)^N\big|\widehat{f}(m)\big|<\infty\quad\forall N=0,1,\ldots,\quad |m|=\Bigl(\sum_{i=1}^n m_i^2\Bigr)^{1/2}. $$ 
\end{teor}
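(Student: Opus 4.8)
The plan is to exploit the standard duality between smoothness of $f$ and rapid decay of its Fourier coefficients, realized through integration by parts on the torus. Writing $D^\alpha=\partial_1^{\alpha_1}\cdots\partial_n^{\alpha_n}$ for a multi-index $\alpha$, the first step is to record the identity
$$
\widehat{D^\alpha f}(m)=(2\pi i)^{|\alpha|}\,m^\alpha\,\widehat{f}(m),\qquad m^\alpha:=m_1^{\alpha_1}\cdots m_n^{\alpha_n},
$$
which follows (up to the chosen normalization of the coefficients) by integrating by parts $|\alpha|$ times and using that $f$ and all its derivatives are periodic, so no boundary terms survive. Since $f\in C^{(\infty}(\mathbb{T}^n)$ forces each $D^\alpha f$ to be continuous on the compact set $\mathbb{T}^n$, hence bounded, we obtain the coefficient bound $|m^\alpha\,\widehat{f}(m)|\le (2\pi)^{-|\alpha|}\|D^\alpha f\|_{L^1(\mathbb{T}^n)}=:C_\alpha$ for every $m\in\mathbb{Z}^n$.

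Next I would upgrade this family of anisotropic polynomial bounds to a single isotropic decay estimate. Applying the identity above to the elliptic operator $(I-\Delta)^k$, whose Fourier symbol is $1+4\pi^2|m|^2$, and noting that $(I-\Delta)^k f\in C^{(\infty}(\mathbb{T}^n)\subset L^1(\mathbb{T}^n)$, one gets
$$
(1+4\pi^2|m|^2)^k\,|\widehat{f}(m)|=\big|\widehat{(I-\Delta)^k f}(m)\big|\le \|(I-\Delta)^k f\|_{L^1(\mathbb{T}^n)}=:C_k,
$$
valid for every nonnegative integer $k$. Equivalently, $|\widehat{f}(m)|\le C_k\,(1+|m|)^{-2k}$ after absorbing constants.

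Finally, fixing $N$, I would insert this bound into the target series and take $k$ large. We have
$$
\sum_{m\in\mathbb{Z}^n}(1+|m|)^N\,|\widehat{f}(m)|\le C_k\sum_{m\in\mathbb{Z}^n}(1+|m|)^{N-2k},
$$
and the last series converges as soon as $2k-N>n$, by the standard comparison of $\sum_m(1+|m|)^{-s}$ with the integral $\int_{\mathbb{R}^n}(1+|x|)^{-s}\,dx$, which is finite precisely when $s>n$. Choosing any integer $k>(N+n)/2$ closes the argument.

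I do not anticipate a genuine obstacle: the entire content lies in the integration-by-parts identity, and the only care needed is the periodicity that kills the boundary terms, together with the elementary integral-comparison test for the $n$-dimensional lattice sum. The step most worth stating cleanly is the elliptic estimate via $(I-\Delta)^k$, since it is what converts the collection of directional bounds $|m^\alpha\widehat f(m)|\le C_\alpha$ into the uniform radial decay that makes the summation transparent, and it is precisely this radial, isotropic control that has no analogue once the number of variables becomes infinite.
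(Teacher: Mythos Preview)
Your argument is correct and is precisely the classical integration-by-parts/elliptic-operator proof. Note, however, that the paper does not supply its own proof of this theorem: it is quoted as a known result with a reference to \cite[Th.~7.25, p.~202]{rudinfa}, and the proof there is essentially the one you have written. So there is nothing to compare---your proposal matches the standard proof to which the paper defers.
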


This same result holds for \emph{cylindrical} infinitely smooth functions defined on the infinite dimensional torus $\mathbb{T}^\omega$, the compact abelian group which is the complete direct sum of countably many copies of $\mathbb{T}\simeq\mathbb{R}/\mathbb{Z}$. Recall that
$f$ is a \textit{cylindrical function} on $\mathbb{T}^\omega$ if $f$ depends only on a finite number of variables, i.e., there exists $n\ge 1$ and  $g_n\colon \Omega_n\to\mathbb{C}$, with $\Omega_n\subseteq\mathbb{T}^n$, such that $f=g_n\circ\pi_n$, with $\pi_n\colon\mathbb{T}^\omega\to\mathbb{T}^n$ being the canonical projection.  The space of cylindrical functions of class $C^{(\infty}$ on $\mathbb{T}^\omega$ (see Definition \ref{def:dif}) is defined (\cite[p. 73--75]{bendikov}) by
$$
\mathcal{D}(\mathbb{T}^\omega)=\bigcup_{n=1}^\infty\bigl\{ g_n\circ\pi_n \vert g_n\in C^{(\infty}(\mathbb{T}^n) \bigr\}
$$
so that if  $f\in\mathcal{D}(\mathbb{T}^\omega)$, then there exist  $p\in\mathbb{N}$ and $g_p\in C^{(\infty}(\mathbb{T}^p)$ such that $f=g_p\circ\pi_p$. 

The dual group of $\mathbb{T}^\omega$, denoted by $\mathbb{Z}^\infty$, is the direct sum of countably many copies of $\mathbb{Z}$, formed by the finitely nonzero sequences of integer numbers. Denote by $dx$ the normalized Haar measure on $\mathbb{T}^\omega$. If $f\in L^1(\mathbb{T}^\omega)$, then the function $\widehat{f}$  defined on $\mathbb{Z}^\infty$ by
$$
\widehat{f}(\bar{n})=\int_{\mathbb{T}^\omega}f(x)e^{-2\pi i \bar{n}\cdot x }\,dx\qquad( \bar{n}\in\mathbb{Z}^\infty)
$$ 
is the \textit{Fourier transform} of $f$, the \textit{Fourier series} of $f$ being the formal series (observe that $\mathbb{Z}^\infty$ is a countable set)

$$
\sum_{\bar{n}\in\mathbb{Z}^\infty} \widehat{f}(\bar{n})e^{2\pi i \bar{n}\cdot x }.
$$

By using the ideas in the proof of Theorem \ref{teor:rudinfa}, the following result can be proved (see also \cite[Proposition 1]{bendikov86}):
\begin{teor} 
If $\phi\in\mathcal{D}(\mathbb{T}^\omega)$, then 
$$
\sum_{\bar{p}\in\mathbb{Z}^\infty}(1+|\bar{p}|)^N\big|\widehat{\phi}(\bar{p})\big|<\infty\quad\forall N=0,1,\ldots,
\quad |\bar{p}|=\Bigl(\sum_{i=1}^\infty p_i^2\Bigr)^{1/2}. 
$$ 
\end{teor}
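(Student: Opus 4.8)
The plan is to exploit the cylindrical structure of $\phi$ in order to collapse the sum over the infinite-dimensional lattice $\mathbb{Z}^\infty$ into a sum over a finite-dimensional lattice, and then to invoke Theorem~\ref{teor:rudinfa} directly. By the definition of $\mathcal{D}(\mathbb{T}^\omega)$, there exist $p\in\mathbb{N}$ and $g_p\in C^{(\infty}(\mathbb{T}^p)$ with $\phi=g_p\circ\pi_p$, so that $\phi(x)$ depends only on the first $p$ coordinates $x_1,\dots,x_p$ of $x\in\mathbb{T}^\omega$.

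First I would compute the Fourier coefficients $\widehat{\phi}(\bar n)$ for $\bar n=(n_1,n_2,\dots)\in\mathbb{Z}^\infty$. Since the Haar measure $dx$ on $\mathbb{T}^\omega$ is the product of the normalized Haar measures on each circle factor, Fubini's theorem lets me separate the integral. Because $g_p$ does not depend on the coordinates $x_j$ with $j>p$, integrating over each such coordinate produces the factor $\int_{\mathbb{T}}e^{-2\pi i n_j x_j}\,dx_j$, which equals $0$ when $n_j\neq 0$ and $1$ when $n_j=0$. Hence $\widehat{\phi}(\bar n)=0$ unless $n_j=0$ for every $j>p$, and in the surviving case the remaining integral over $\mathbb{T}^p$ is exactly $\widehat{g_p}(n_1,\dots,n_p)$. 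Thus the Fourier transform of $\phi$ is supported on the slice $\{\bar n\in\mathbb{Z}^\infty: n_j=0,\ j>p\}$, which I identify with $\mathbb{Z}^p$.

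Next I would observe that under this identification the Euclidean length is preserved: for $\bar p=(m_1,\dots,m_p,0,0,\dots)$ with $m=(m_1,\dots,m_p)\in\mathbb{Z}^p$ one has $|\bar p|=|m|$, since the trailing zeros contribute nothing. Consequently, for each $N$,
$$
\sum_{\bar p\in\mathbb{Z}^\infty}(1+|\bar p|)^N\big|\widehat{\phi}(\bar p)\big|=\sum_{m\in\mathbb{Z}^p}(1+|m|)^N\big|\widehat{g_p}(m)\big|.
$$
The right-hand side is finite for every $N=0,1,\dots$ by Theorem~\ref{teor:rudinfa} applied in dimension $p$, since $g_p\in C^{(\infty}(\mathbb{T}^p)$. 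This completes the argument.

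There is no serious obstacle here: the entire result is driven by the fact that a cylindrical function has Fourier coefficients concentrated on a finite-dimensional sublattice, after which the infinite-dimensional statement is literally the finite-dimensional Theorem~\ref{teor:rudinfa}. The only point requiring minor care is the justification of the coordinatewise integration, i.e.\ confirming that the orthogonality relations on each $\mathbb{T}$-factor may be applied through Fubini on the product measure space; this is routine once one recalls that the characters of $\mathbb{T}^\omega$ are finite products of characters of the individual circles.
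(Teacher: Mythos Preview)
Your argument is correct. The paper does not actually supply a proof of this theorem; it merely states that it ``can be proved'' by using the ideas in the proof of Theorem~\ref{teor:rudinfa} and gives a reference. Your reduction---showing that $\widehat{\phi}$ is supported on the copy of $\mathbb{Z}^p$ inside $\mathbb{Z}^\infty$ and then invoking Theorem~\ref{teor:rudinfa} for $g_p\in C^{(\infty}(\mathbb{T}^p)$---is exactly the intended route and, if anything, is slightly cleaner than re-running the integration-by-parts argument behind Theorem~\ref{teor:rudinfa}, since you appeal to its statement directly rather than to its proof.
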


In May 2016, in a private communication to the second author, Professor  A. D. Bendikov conjectured that the implication $f\in C^{(\infty}(\mathbb{T}^\omega) \Rightarrow \sum_{\bar{p}\in\mathbb{Z}^\infty}|\widehat{f}(\bar{p})|<\infty$, which holds, as already mentioned, for functions depending only on a finite number of variables, is in general false. This would mean a significant difference to what happens in the finite-dimensional case. 

In this note we confirm Bendikov's conjecture by producing some counterexamples via quadratic forms depending on an infinite number of variables. The construction of such counterexamples is based on classical results of Toeplitz \cite{toeplitz1913}, Littlewood \cite{little1930} and Bohnenblust and Hille \cite{bohnen1931}\footnotemark.

\footnotetext{Prof. Bendikov suggested that a counterexample could be constructed through an appropriate Jacobi theta function in an infinite number of variables. Our construction is different.}

 The main result in this note is the following.
\begin{teor} 
\label{teor:main}
There exist functions of the class $C^{(\infty}(\mathbb{T}^\omega)$ (depending on an infinite number of variables) whose Fourier series diverges absolutely. 
\end{teor}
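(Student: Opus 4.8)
The plan is to construct an explicit function on $\mathbb{T}^\omega$ of the form
\[
f(x) = \sum_{k=1}^\infty c_k \, Q_k(x),
\]
where each $Q_k$ is (essentially) a homogeneous degree-$2$ trigonometric polynomial built from a \emph{block} of fresh variables, and where the coefficients $c_k$ decay fast enough to force $C^{(\infty}$ smoothness but slowly enough that the $\ell^1$-norm of the Fourier coefficients of $f$ diverges. The key tension is exactly the one the paper advertises: on a single block of $n$ variables, Theorem~\ref{teor:rudinfa} guarantees rapid decay, so no finite-dimensional piece can fail; the divergence must be manufactured \emph{across} infinitely many blocks, exploiting the gap between the number of Fourier coefficients of a quadratic form in $n$ variables ($\sim n^2$) and the size of the sup-norm one can achieve while keeping $\ell^1$ large.

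Concretely, I would carry out the following steps. First, fix a sequence of disjoint finite index sets $B_1, B_2, \dots \subset \mathbb{N}$ with $\envert{B_k} = n_k$ and $n_k \to \infty$, and on each block $B_k$ choose a \emph{unimodular} quadratic form
\[
Q_k(x) = \sum_{i,j \in B_k} \varepsilon^{(k)}_{ij}\, e^{2\pi i (x_i + x_j)}, \qquad \envert{\varepsilon^{(k)}_{ij}} = 1,
\]
whose coefficients are chosen (via Toeplitz/Littlewood or a random/probabilistic selection à la Bohnenblust--Hille) so that the sup-norm $\norm{Q_k}_\infty$ is as small as possible relative to the number of terms. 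The classical bound to invoke is that one can arrange
\[
\norm{Q_k}_\infty \lesssim n_k^{3/2}\sqrt{\log n_k}\quad\text{(or similar)}, \qquad \text{while} \qquad \sum_{i,j\in B_k}\envert{\varepsilon^{(k)}_{ij}} = n_k^2 .
\]
Because the blocks use disjoint variables, the Fourier coefficients of the $Q_k$ live on disjoint frequency sets, so $\sum_{\bar p}\envert{\widehat{f}(\bar p)} = \sum_k \envert{c_k}\, n_k^2$ and there is no cancellation in the $\ell^1$ count. Second, I would verify $C^{(\infty}$ smoothness: since $f$ and all its partial derivatives are built from the uniformly bounded pieces $c_k Q_k$, and each derivative only brings down bounded trigonometric factors, the estimate $\norm{\partial^\alpha f}_\infty \lesssim \sum_k \envert{c_k}\norm{Q_k}_\infty \lesssim \sum_k \envert{c_k}\, n_k^{3/2}\sqrt{\log n_k}$ (for the relevant derivatives on block $k$) must converge for every multi-index $\alpha$; here one must be careful that the derivative in an ``infinite-dimensional'' sense controls all directional derivatives uniformly, so the right smoothness notion from Definition~\ref{def:dif} is the one that must be checked. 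Third, I would choose $c_k$ to straddle the two requirements: pick $n_k \to \infty$ fast and $c_k = n_k^{-3/2}(\log n_k)^{-1}\, k^{-2}$, say, so that $\sum_k \envert{c_k}\norm{Q_k}_\infty < \infty$ (smoothness) while $\sum_k \envert{c_k}\, n_k^2 = \sum_k n_k^{1/2}(\log n_k)^{-1} k^{-2}$ diverges once $n_k$ grows like $k^{6}$ or faster (absolute divergence). Tuning these exponents to make both statements hold simultaneously is the crux.

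The main obstacle I anticipate is precisely the \emph{quantitative} interplay between the three quantities: the sup-norm of the chosen quadratic form, the $\ell^1$-mass of its coefficients, and the cost of differentiating. The finite-dimensional theorem forbids any cheap trick within a block, so everything hinges on having, for each $n$, a quadratic form in $n$ variables whose $\ell^1$-coefficient mass outgrows its sup-norm by a factor that tends to infinity with $n$ — this is exactly the content of the Littlewood and Bohnenblust--Hille inequalities (the failure of the trivial $\ell^1 \le$ (number of terms)$\times\norm{\cdot}_\infty$ estimate to be reversible), and the construction lives or dies on plugging in the sharp constants there. A secondary but real subtlety is confirming that a countable superposition $\sum_k c_k Q_k$ of genuinely infinite-variable-dependence still defines an element of $C^{(\infty}(\mathbb{T}^\omega)$ under whatever differentiability definition Definition~\ref{def:dif} provides; I would expect to verify uniform convergence of the formal derivative series directly, using the disjointness of the blocks to reduce each derivative computation to a single block at a time. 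Once these estimates line up, the contrast with Theorem~\ref{teor:rudinfa} (valid only for cylindrical $f \in \mathcal{D}(\mathbb{T}^\omega)$) furnishes Theorem~\ref{teor:main}.
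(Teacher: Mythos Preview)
Your proposal is correct and follows essentially the same route as the paper: the paper builds $F(x)=\sum_\alpha \mu_\alpha\,8^{-\alpha}C_\alpha(e^{2\pi i x_{j}},\,j\in B_\alpha)$ with disjoint blocks $B_\alpha$ of size $4^\alpha$, where $C_\alpha$ is Toeplitz's explicit $\pm1$ quadratic form (built from a unitary matrix, so $\|C_\alpha\|_\infty\le (4^\alpha)^{3/2}$ with no $\sqrt{\log}$ factor), and chooses $\mu_\alpha$ with $\sum\mu_\alpha<\infty$ but $\sum 2^\alpha\mu_\alpha=\infty$. The only refinement worth noting is that the paper's smoothness argument is cleaner than your sketch: because the whole function is a completely bounded quadratic form in the $e^{2\pi i x_j}$, each first partial derivative is a linear form with absolutely summable coefficients (Lemma~\ref{stepI}) and every second partial derivative is constant, so $F\in C^{(\infty}(\mathbb{T}^\omega)$ follows immediately from Corollary~\ref{cuadratica} without the term-by-term estimate you propose.
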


Although we restrict ourselves to the case of the infinite dimensional torus, we point out that Bendikov and L. Saloff-Coste \cite{bendikov2005} have studied several scales of smooth functions in the more general setting of connected infinite-dimensional compact groups.

In Section \ref{sec:prelim} we introduce some definitions and give several basic results. In Section~\ref{sec:bilinear} we present a detailed account of bilinear and quadratic forms in an infinite number of variables used to construct our counterexamples. The proof of Theorem~\ref{teor:main} and the counterexamples are given in Section \ref{sec:diverge}.

\section{Premilinary definitions and results}
\label{sec:prelim}

We begin by providing some basic principles.

\begin{defi} (\cite[p. 130]{bohr1925}.) 
The function $f\colon\mathbb{T}^\omega\to\mathbb{C}$ is \emph{continuous at the point} $x^{(0)}=(x_1^{0},x_2^{0},\ldots)$ if for every $\varepsilon>0$ there is a positive integer $m$ and a number $\delta>0$ such that for each 
$(x_1,x_2,\ldots)\in\mathbb{T}^\omega$ satisfying 
$$
\abs{x_{j}-x_{j}^{0}}<\delta \quad (j=1,2,\ldots,m),
$$
we have
$$
\abs{f(x_1,x_2,\ldots)-f(x_1^{0},x_2^{0},\ldots)}<\varepsilon.
$$
Since $\mathbb{T}^\omega$ is compact, the vector space 
$$C^{(0}(\mathbb{T}^\omega)=\left\{f\colon\mathbb{T}^\omega\to\mathbb{C}\:\vert\: f \text{ is continuous at all } x\in\mathbb{T}^\omega\right\}
$$ 
is a Banach space with the norm $\norm{f}_\infty=\max_{x\in\mathbb{T}^\omega}\abs{f(x)}$. 
\end{defi}

\begin{lema} \label{lema14}
Let $\varphi(t)\in C^{(0}(\mathbb{T})$  and $\sum_{j=1}^\infty a_j$ be an absolutely convergent series of complex numbers. Then the function 
$$
\Psi(x)=\sum_{j=1}^\infty a_j\varphi(x_j)
$$ 
is continuous on $\mathbb{T}^\omega$.
\end{lema}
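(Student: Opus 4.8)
The plan is to verify the pointwise continuity criterion of the Bohr definition directly, by an $\varepsilon/2$ split of the defining series into a controllable head and a uniformly small tail.

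First I would record the two facts about $\varphi$ that make everything work: since $\mathbb{T}$ is compact, $\varphi$ is bounded, say $\norm{\varphi}_\infty=M$, and it is uniformly continuous on $\mathbb{T}$. Together with $\sum_j\abs{a_j}<\infty$, the Weierstrass $M$-test (each term being bounded by $M\abs{a_j}$) shows that the series defining $\Psi$ converges absolutely and uniformly, so that $\Psi$ is a well-defined bounded function; this is what lets me manipulate the series termwise in what follows.

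Next, I would fix a point $x^{(0)}=(x_1^0,x_2^0,\ldots)$ and $\varepsilon>0$, and write
$$
\Psi(x)-\Psi(x^{(0)})=\sum_{j=1}^\infty a_j\big(\varphi(x_j)-\varphi(x_j^0)\big).
$$
The heart of the argument is to split this sum at an index $N$ chosen so that the tail is negligible no matter how the late coordinates behave. Using $\abs{\varphi(x_j)-\varphi(x_j^0)}\le 2M$, the tail over $j>N$ is bounded by $2M\sum_{j>N}\abs{a_j}$, which I can make $<\varepsilon/2$ by taking $N$ large, thanks to the absolute convergence. For the head $\sum_{j\le N}$ I would invoke uniform continuity: choose $\delta>0$ so that $\abs{s-t}<\delta$ forces $\abs{\varphi(s)-\varphi(t)}<\varepsilon/(2A)$, where $A=1+\sum_j\abs{a_j}$; then whenever $\abs{x_j-x_j^0}<\delta$ for $j=1,\ldots,N$, the head is at most $(\varepsilon/(2A))\sum_{j\le N}\abs{a_j}<\varepsilon/2$. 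Taking $m=N$ together with this $\delta$ yields $\abs{\Psi(x)-\Psi(x^{(0)})}<\varepsilon$, which is exactly the criterion required; since $x^{(0)}$ was arbitrary, continuity holds everywhere.

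The only point that deserves care — and the real reason the statement holds — is that the Bohr definition of continuity permits control of only finitely many coordinates. The late coordinates $x_j$ for $j>N$ remain completely free, so the tail must be forced to be small \emph{uniformly} over all of $\mathbb{T}^\omega$; this is precisely where boundedness of $\varphi$ and the absolute (rather than merely conditional) convergence of $\sum_j a_j$ are indispensable. Everything else is a routine $\varepsilon/2$ estimate, so I expect no further obstacle.
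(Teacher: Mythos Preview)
Your proof is correct and follows essentially the same head--tail $\varepsilon$-split as the paper's own argument. The only cosmetic difference is that you invoke the uniform continuity of $\varphi$ on the compact set $\mathbb{T}$ to obtain a single $\delta$ for the head, whereas the paper uses pointwise continuity of $\varphi$ at each $x_j^{(0)}$ and then takes $\delta=\min_{1\le j\le m_1}\delta_j$; your version is slightly more streamlined but the idea is the same.
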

\begin{proof}
We can clearly suppose that $\varphi$ is not the zero function. Fix $x^{(0)}\in \mathbb{T}^\omega$. Given $\varepsilon>0$, since $\sum_{j=1}^\infty \abs{a_j}$ converges, there exists $m_1\in\mathbb{N}$ such that
$$
\sum_{j=m_1+1}^N\abs{a_j}<\frac\varepsilon{4\norm{\varphi}_\infty}\quad \text{for all } N>m_1.
$$ 
On the other hand, for each $j=1,\ldots,m_1$, the continuity of $\varphi$ at $x_j^{(0)}$ ensures the existence of $\delta_j>0$ such that if $\bigl|x_j-x_j^{(0)}\bigr|<\delta_j$, then
$$
\bigl|\varphi(x_j)-\varphi(x_j^{(0)})\bigr|<\frac\varepsilon{2m_1\abs{a_j}}.
$$
Let $\delta=\min_{1\le j\le m_1} \delta_j$. If $x\in\mathbb{T}^\omega$ satisfies $\bigl|x_j-x_j^{(0)}\bigr|<\delta$ for $j=1,\ldots,m_1$, then for all $N>m_1$,
\begin{align*}
\biggl|\sum_{j=1}^N a_j\varphi(x_j)&-\sum_{j=1}^N a_j\varphi(x_j^{(0)})\biggr|
=\biggl|\sum_{j=1}^N a_j\bigl(\varphi(x_j)-\varphi(x_j^{(0)})\bigr)\biggr|\\
&\le\sum_{j=1}^{m_1}  \abs{a_j}\bigl|\varphi(x_j)-\varphi(x_j^{(0)})\bigr|+ \sum_{j=m_1+1}^N  \abs{a_j}\bigl|\varphi(x_j)-\varphi(x_j^{(0)})\bigr|\\
&\le\sum_{j=1}^{m_1}  \abs{a_j}\cdot\frac\varepsilon{2m_1\abs{a_j}}+2\norm{\varphi}_\infty\sum_{j=m_1+1}^N  \abs{a_j}<\frac\varepsilon2+\frac\varepsilon2=\varepsilon.
\end{align*}
Moreover, $\sum_{j=1}^\infty \abs{a_j\varphi(x_j)}\le \norm{\varphi}_\infty\sum_{j=1}^\infty \abs{a_j}$ for all $x\in \mathbb{T}^\omega$. Therefore, the series defining $\Psi(x)$ is absolutely convergent, thus $\Psi(x)$ is defined for all $x\in \mathbb{T}^\omega$ and there exists $m_2=m_2(\varepsilon)$ such that, if $N>m_2$, then
$$
\bigl|\Psi(x)-\sum_{j=1}^N a_j\varphi(x_j)\bigr|<\varepsilon\quad \forall x\in \mathbb{T}^\omega.
$$
Consequently, taking $M=\max\{m_1,m_2\}$, we have 
\begin{align*}
\bigl|\Psi(x)-\Psi(x^{(0)})\bigr|&\le\bigl|\Psi(x)-\sum_{j=1}^M a_j\varphi(x_j)\bigr|+\biggl|\sum_{j=1}^M a_j\varphi(x_j)-\sum_{j=1}^M a_j\varphi(x_j^{(0)})\biggr|\\
&\qquad+\bigl|\Psi(x^{(0)})-\sum_{j=1}^M a_j\varphi(x_j^{(0)})\bigr|
\\&<3\varepsilon
\end{align*} 
if $x\in \mathbb{T}^\omega$ satisfies $\bigl|x_j-x_j^{(0)}\bigr|<\delta$ for $j=1,\ldots,M$, and therefore $\Psi(x)$ is continuous at $x^{(0)}$.
\end{proof}

\begin{defi} 
\label{def:dif}
Let $f$ be a function defined on $\mathbb{T}^\omega$.
For each multiindex $\alpha=(\alpha_1,\alpha_2,\ldots)$ which is finitely nonzero, that is, $\alpha_j\ne 0$ for only finitely many $j$, the \emph{partial differentiation operator} $D^{\alpha}$ is defined by
$$
D^\alpha f= D_{j_1}^{\alpha_{j_1}}\cdots D_{j_m}^{\alpha_{j_m}}f=\frac{\partial^{\alpha_{j_1}}}{\partial x_{j_1}^{\alpha_{j_1}}} \cdots \frac{\partial^{\alpha_{j_m}}}{\partial x_{j_m}^{\alpha_{j_m}}}f
\quad\text{if $\alpha_j=0$ \text{ for } $ j\notin\{j_1,\ldots,j_m\}$.}
$$
The \emph{total order} of $\alpha$ is $\abs{\alpha}=\alpha_{j_1}+\ldots+\alpha_{j_m}$.  When $\abs{\alpha}=0$, $D^\alpha f=f$.

For each $k$, $C^{(k}(\mathbb{T}^\omega)$ is defined as the class of all functions
$f$  with continuous \emph{partial derivatives} up to the $k$-th order, i.e., $D^\alpha f\in C^{(0}(\mathbb{T}^\omega)$ for all finitely nonzero multiindices $\alpha$ such that $\abs{\alpha}\le k$.  With the norm
$$
\norm{f}_{(k)}=\sup_{0\le\abs{\alpha}\le k}\norm{D^\alpha f}_\infty
$$
where $\norm{D^\alpha f}_\infty=\max_{x\in\mathbb{T}^\omega}\abs{(D^\alpha f)(x)}$ for each fixed $\alpha$,
$C^{(k}(\mathbb{T}^\omega)$ is a  Banach space \cite[2.2.4]{edwards}. 
The space of infinitely differentiable functions  is the intersection  $C^{(\infty}(\mathbb{T}^\omega)=\bigcap_{k=0}^\infty C^{(k}(\mathbb{T}^\omega)$ and it is a  Fr\'echet space \cite[12.1]{edwards}.
\end{defi}

\smallskip
\noindent\textbf{Double series.} (See \cite[pp. 72-76]{bromwich}, and also \cite{moricz2011}.) Consider a double series of complex numbers, 
\begin{equation}\label{seriedoble}
\sum_{m,n=1}^\infty a_{mn}.
\end{equation} 
\emph{Rectangular} partial (finite) sums of \eqref{seriedoble} are 
$$s_{MN}:=\sum_{m=1}^M\sum_{n=1}^N a_{mn}, \quad(M,N)\in\mathbb{N}^2.$$ 
The series \eqref{seriedoble} is said to \emph{converge to $s\in\mathbb{C}$ in Pringsheim's sense} when  for every $\varepsilon>0$ there exists $\mu$ such that
$$
|s_{MN}-s|<\varepsilon \quad\text{if $M,N\ge\mu$}.
$$ 

A necessary and sufficient condition for the convergence of \eqref{seriedoble} in Pringsheim's sense is:
\begin{equation}\label{cnysprings}
\forall\varepsilon>0 \ \exists\mu: |s_{PQ}-s_{MN}|<\varepsilon \text{ if } P>M\ge\mu \text{ and } Q>N\ge\mu.
\end{equation}

When the series $\sum_{m,n}a_{mn}$ and $\sum_{m,n}b_{mn}$ converge in Pringsheim's sense, so does $\sum_{m,n}(a_{mn}+b_{mn})$, and
\begin{equation}\label{doblesuma}
\sum_{m,n}(a_{mn}+b_{mn})=\sum_{m,n}a_{mn}+\sum_{m,n}b_{mn}.
\end{equation}

Hardy \cite[p. 88]{hardy17} introduced the notion of regular convergence of a double series as follows: the series \eqref{seriedoble} is said \emph{to converge regularly
to $s\in\mathbb{C}$} if it converges to $s$ in Pringsheim’s sense and, in addition, each of its
row and column series,  $\sum_{n=1}^\infty a_{mn}$ for each $m=1,2,\ldots$, and  $\sum_{m=1}^\infty a_{mn}$ for each $n=1,2,\ldots$,  also converges as a single series.

An absolutely convergent double series is also regularly convergent, and regular convergence is sufficient for
$$
\sum_{m,n=1}^\infty a_{mn}=\sum_{m=1}^\infty \sum_{n=1}^\infty a_{mn}=\sum_{n=1}^\infty \sum_{m=1}^\infty a_{mn}
$$
hold \cite[Th. 1]{moricz2011}.

\section{Bilinear and quadratic forms in an infinite number of variables}
\label{sec:bilinear}

Let us denote by $\mathcal{S}:=\bigl\{ (z_n)_{n=1}^\infty \,|\, z_n\in\mathbb{C},\  \abs{z_n}\le 1 \ \forall n\in\mathbb{N}\bigr\}$ the infinite-dimensional polydisc 
(the closed unit ball of $\ell_\infty(\mathbb{N})$). Analogously to $\mathbb{T}^\omega$, we will consider the space  $\mathcal{S}$ with the topology of the cartesian product of infinitely many closed unit circles of the complex plane. In particular, if $x\in\mathbb{T}^\omega$, then
$$
z=e^{2\pi i x}:=\bigl(e^{2\pi i x_1},\ldots,e^{2\pi i x_n},\ldots \bigr)\in\mathcal{S}.
$$

We define a \emph{bilinear form in $\mathcal{S}$} (in principle only formally) by the expression
\begin{equation}\label{qbili}
Q(x,y):=\sum_{m,n=1}^\infty a_{mn}x_my_n \qquad (a_{mn}\in\mathbb{C},\ x,y\in\mathcal{S}).
\end{equation}
The bilinear character and the very existence of the function $Q(x,y)$ depend on the convergence of the double series above.

\begin{defi} The series  \eqref{qbili} is \emph{completely bounded in $\mathcal{S}$} if there is a constant  $H$ such that
\begin{equation}\label{qmnleh}
\biggl|\sum_{m=1}^M \sum_{n=1}^N a_{mn}x_my_n\biggr|\le H \quad \forall x,y\in\mathcal{S}, \ \forall M,N\in\mathbb{N}.
\end{equation}
\end{defi}
The following property is immediate.

\begin{lema}\label{stepI}
Suppose that the series  \eqref{qbili} is completely bounded in $\mathcal{S}$. Then  the series $\sum_{n=1}^\infty \abs{a_{mn}}$ for each $m\in\mathbb{N}$, and  $\sum_{m=1}^\infty \abs{a_{mn}}$ for each $n\in\mathbb{N}$, are convergent.
\end{lema}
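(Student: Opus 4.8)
The plan is to exploit the freedom in the completely bounded inequality \eqref{qmnleh} by testing it against cleverly chosen points of $\mathcal{S}$. The crucial observation is that $\mathcal{S}$ is the closed unit ball of $\ell_\infty(\mathbb{N})$, so it contains both the coordinate vectors (sequences with a single nonzero entry equal to $1$ and all others $0$) and arbitrary unimodular sequences. These two features let me isolate a single row or column and, at the same time, force every surviving term to become nonnegative, which turns the uniform bound $H$ into a bound on the partial sums of a series of absolute values.

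To prove convergence of $\sum_{n=1}^\infty \abs{a_{mn}}$ for a fixed $m$, I would fix that index $m$ and choose $x\in\mathcal{S}$ to be the $m$-th coordinate vector, i.e.\ $x_m=1$ and $x_k=0$ for $k\ne m$. For every $M\ge m$ this collapses the double sum in \eqref{qmnleh} to $\sum_{n=1}^N a_{mn}y_n$. Next I would choose $y\in\mathcal{S}$ by setting $y_n=\overline{a_{mn}}/\abs{a_{mn}}$ whenever $a_{mn}\ne 0$ (and, say, $y_n=0$ otherwise), so that $\abs{y_n}\le 1$ and $a_{mn}y_n=\abs{a_{mn}}$. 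With these choices \eqref{qmnleh} reads
$$
\sum_{n=1}^N \abs{a_{mn}}\le H\qquad\text{for every }N\in\mathbb{N}.
$$
Since the partial sums of the nonnegative series $\sum_{n} \abs{a_{mn}}$ are increasing and bounded above by $H$, the series converges (with sum at most $H$). The argument for the columns $\sum_{m=1}^\infty \abs{a_{mn}}$ with $n$ fixed is identical after interchanging the roles of $x$ and $y$: take $y$ to be the $n$-th coordinate vector and $x_m=\overline{a_{mn}}/\abs{a_{mn}}$.

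There is essentially no hard step here; the entire content is the simultaneous choice of a coordinate vector (to select one row or column) together with a unimodular sequence (to align the phases and make each term equal to $\abs{a_{mn}}$). The only minor bookkeeping is the convention for the unimodular multiplier when the corresponding coefficient vanishes, which is irrelevant since such terms contribute nothing to the sum. Accordingly I would present this as a short direct verification rather than as a structured argument.
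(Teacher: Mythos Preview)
Your argument is correct and is essentially the same as the paper's: both isolate a fixed row or column by taking one of the variables to be a coordinate vector in $\mathcal{S}$ and the other to be the unimodular sequence $\overline{a_{mn}}/\abs{a_{mn}}$, so that the bound $H$ in \eqref{qmnleh} majorizes every partial sum of $\sum\abs{a_{mn}}$. The only cosmetic difference is that the paper carries this out for a fixed column first (using the $M\times M$ sections $Q_{MM}$), whereas you start with a fixed row; your handling of the case $a_{mn}=0$ is in fact slightly more explicit than the paper's.
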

\begin{proof}
For $M,N\in\mathbb{N}$, let $Q_{MN}(x,y)$ denote the rectangular partial sum or section
\begin{equation*}\label{teorlit3}
Q_{MN}(x,y):=\sum_{m=1}^M \sum_{n=1}^N a_{mn}x_my_n\quad (x,y\in\mathcal{S}).
\end{equation*}
The section $Q_{MN}(x,y)$ only depends on the first $M$  components of $x$ and on the  first $N$ components of  $y$, 
and thus we can consider it as a bilinear form on $D^M\times D^N$, where $D$ denotes the closed unit disc of the complex plane. Let us write
\begin{equation*}\label{eq114prima}
x^{(M)}:=(x_1,\ldots,x_M),\quad y^{(N)}:=(y_1,\ldots,y_N).
\end{equation*}
Then by hypothesis we have 
$$
\big|Q_{MN}\bigl(x^{(M)},y^{(N)}\bigr)\big|\le H \quad\text{ if \ $\big\|x^{(M)}\big\|_\infty,\,\big\|y^{(N)}\big\|_\infty\le 1$.}
$$

Fix $n_0\in\mathbb{N}$ (when we fix $m_0\in\mathbb{N}$, we proceed in a similar way). Consider the points
$$
\xi_{n_0}:=\Bigl(\frac{\overline{a_{1n_0}}}{\abs{a_{1n_0}}},\ldots, \frac{\overline{a_{mn_0}}}{\abs{a_{mn_0}}},\ldots\Bigr)\quad
\text{ and }\quad 
\eta_{n_0}:= (\delta_{1n_0},\ldots,\delta_{mn_0},\ldots)
$$
($\delta_{ij}$ is the Kronecker's symbol).
Obviously, $\xi_{n_0}$ and $\eta_{n_0}$ belongs to $\mathcal{S}$, and for each $M\in\mathbb{N}$ such that  $M>n_0$, 
$$
\sum_{m=1}^M \abs{a_{mn_0}}=Q_{MM}\bigl(\xi_{n_0}^{(M)},\eta_{n_0}^{(M)}\bigr)=Q_{MM}(\xi_{n_0},\eta_{n_0})=|Q_{MM}(\xi_{n_0},\eta_{n_0})|\le H
$$
with $H$ independent of $M$. Consequently the series  $\sum_{m=1}^\infty \abs{a_{mn_0}}$ is convergent.
\end{proof}


The theorem which follows is due to Littlewood \cite[p. 166-168]{little1930}. 

\begin{teor} \label{teorlittle} If the series \eqref{qbili} is completely bounded in $\mathcal{S}$ by a constant $H$, then it converges in Pringsheim's sense, uniformly in $\mathcal{S}^2$, to a bilinear form $Q(x,y)$ which satisfies  $|Q(x,y)|\le H$ for all $x,y\in\mathcal{S}$ \emph{(we then say that the bilinear form $Q(x,y)$ is \emph{completely bounded on  $\mathcal{S}$})}.
\end{teor}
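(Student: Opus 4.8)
The plan is to verify the Cauchy criterion \eqref{cnysprings} for Pringsheim convergence, but \emph{uniformly} over $(x,y)\in\mathcal{S}^2$, and then to read off the bound $|Q|\le H$ and the bilinear character by passing to the limit in the sections $Q_{MN}$. For $P>M$ and $Q>N$ I would write the increment $Q_{PQ}(x,y)-Q_{MN}(x,y)$ as a sum over the L-shaped index region and split it into three rectangular blocks: $[M+1,P]\times[1,N]$, $[1,M]\times[N+1,Q]$, and the corner $[M+1,P]\times[N+1,Q]$. The whole statement then reduces to a single quantitative fact, namely that the complete-boundedness ``norms'' of the tail forms
\[
\rho(M):=\sup\Bigl|\sum_{m=M+1}^{P}\sum_{n=1}^{N}a_{mn}x_my_n\Bigr|,\qquad
\gamma(N):=\sup\Bigl|\sum_{m=1}^{M}\sum_{n=N+1}^{Q}a_{mn}x_my_n\Bigr|,
\]
the suprema being taken over all admissible cut-offs and all $x,y\in\mathcal{S}$, tend to $0$ as $M\to\infty$ and $N\to\infty$ respectively. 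Indeed the first block is bounded by $\rho(M)$, while the second block and the corner are each bounded by $\gamma(N)$ (the corner being a subform of the columns beyond $N$, after zeroing the rows $m\le M$); hence $\sup_{x,y}|Q_{PQ}-Q_{MN}|\le\rho(M)+2\gamma(N)$. Uniform Pringsheim convergence of $Q_{MN}$ to a limit $Q(x,y)$ then follows from completeness, the bound $|Q|\le H$ from \eqref{qmnleh}, and bilinearity by passage to the limit from the sections.

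Before attacking the crux I would record two preliminary facts that render the ``few rows / few columns'' contributions harmless. Lemma \ref{stepI} already gives $\sum_m|a_{mn}|\le H$ and $\sum_n|a_{mn}|\le H$; applying the same phase-vector device to an entire window of columns and letting the number of rows tend to infinity shows that the linear map $x\mapsto\bigl(\sum_m a_{mn}x_m\bigr)_n$ carries $\mathcal{S}$ into the ball of radius $H$ of $\ell_1$. In particular every section involving only finitely many columns is absolutely, hence uniformly, convergent, so that $\gamma(N)$ genuinely measures only the size of the columns beyond $N$. This also recasts the crux: $\gamma(N)\to0$ is equivalent to the relative compactness in $\ell_1$ of the image of $\mathcal{S}$ under that map, i.e.\ to the uniform smallness of its tails $\sup_{x}\sum_{n>N}\bigl|\sum_m a_{mn}x_m\bigr|$.

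The heart of the matter, and the step I expect to be the main obstacle, is precisely this vanishing $\gamma(N)\to0$ (its companion $\rho(M)\to0$ following by transposing the two variables). Mere summability of rows and columns does \emph{not} suffice, since the array need not be absolutely summable, this being exactly what later makes the counterexamples possible; the decay must therefore be extracted from the full force of \eqref{qmnleh}. I would argue by contradiction through a gliding-hump construction: if $\gamma(N)\ge4\delta$ for every $N$, then along a rapidly increasing sequence of cut-offs one selects finite sections supported on pairwise disjoint windows of columns, each of modulus $\ge2\delta$ for suitable $x,y$, and superposes them, aligning their phases, into a single admissible pair in $\mathcal{S}$ whose section grows like (number of blocks) times $2\delta$, contradicting the uniform bound $H$. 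The genuinely delicate point is the control of the cross-interactions between distinct blocks: a block placed in high rows interacts with the columns of earlier blocks, while a block placed in late columns interacts with the rows of earlier blocks, and these two demands pull the cut-offs in opposite directions. Making all off-diagonal contributions summably small thus forces a careful lacunary choice of the row- and column-cut-offs \emph{simultaneously}, exploiting the decay of both the row-tails and the column-tails supplied by Lemma \ref{stepI}; this decoupling is the technical core of Littlewood's argument and the place where the proof must be handled with real care.

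Once $\rho(M)\to0$ and $\gamma(N)\to0$ are established, the reduction of the first paragraph yields the uniform Pringsheim convergence of $Q_{MN}(x,y)$ to $Q(x,y)$ on $\mathcal{S}^2$, and the inequality $|Q(x,y)|\le H$ together with the bilinearity of $Q$ descend from the corresponding properties of the sections $Q_{MN}$, completing the argument.
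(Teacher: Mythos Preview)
The paper does not give its own proof of this theorem: it simply attributes the result to Littlewood and cites \cite[p.~166--168]{little1930}, immediately passing to Corollary~\ref{corolittle}. So there is no in-paper argument to compare against; the relevant benchmark is Littlewood's original proof.

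Your proposal is in fact a faithful outline of Littlewood's strategy. The reduction to the uniform Cauchy criterion via the L-shaped decomposition, the observation that $x\mapsto\bigl(\sum_m a_{mn}x_m\bigr)_n$ maps $\mathcal{S}$ into the $H$-ball of $\ell_1$ (which follows exactly as you indicate, by the phase-choice device behind Lemma~\ref{stepI}), and the recasting of $\gamma(N)\to0$ as uniform smallness of $\ell_1$-tails are all correct and match Littlewood's set-up. You also identify the genuine crux correctly: bounded subsets of $\ell_1$ need not have uniformly small tails, so $\gamma(N)\to0$ does not follow from the $\ell_1$-bound alone and requires the full hypothesis \eqref{qmnleh}. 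The gliding-hump contradiction you sketch is precisely how Littlewood closes the argument; your cautionary remark about the simultaneous lacunary choice of row- and column-cut-offs to kill the cross-terms is apt, and this is indeed where the care lies. In short, your plan is sound and essentially reproduces the cited proof; the only thing missing is the explicit bookkeeping of the hump construction, which you have flagged rather than carried out.
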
 

Observe that $Q(x,y)$ is a bilinear form if for all  $ x,x',y,y'\in\mathcal{S}$ it is verified
\begin{equation}\label{bilinealidad}
Q(x,y+y')=Q(x,y)+Q(x,y'),\quad Q(x+x',y)=Q(x,y)+Q(x',y). 
\end{equation}

If the bilinear form $Q(x,y)$ is completely bounded on  $\mathcal{S}$, then in particular, given
$\varepsilon>0$, there exists $\nu_1=\nu_1(\varepsilon)$ such that
$\bigl|Q(x,y)-Q_{\nu\nu}(x,y)\bigr|<\varepsilon$ for all $(x,y)\in\mathcal{S}^2$ and $\nu\ge\nu_1$. From this we easily deduce:

\begin{corol}\label{corolittle} A bilinear form $Q(x,y)$ completely bounded on  $\mathcal{S}$ defines a continuous function  on $\mathcal{S}^2$.
\end{corol}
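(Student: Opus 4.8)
The plan is to realize $Q(x,y)$ as a uniform limit of continuous functions and then invoke the classical fact that a uniform limit of continuous functions is continuous, carried out as a standard $\varepsilon/3$ estimate adapted to the product topology on $\mathcal{S}^2$. The two ingredients are already in place: Theorem~\ref{teorlittle} guarantees that the rectangular sections $Q_{\nu\nu}$ converge to $Q$ \emph{uniformly} on $\mathcal{S}^2$ (this is exactly the inequality $\bigl|Q(x,y)-Q_{\nu\nu}(x,y)\bigr|<\varepsilon$ valid for all $(x,y)\in\mathcal{S}^2$ and $\nu\ge\nu_1(\varepsilon)$, recorded just before the statement), and each approximant $Q_{\nu\nu}$ is manifestly continuous. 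So the whole argument reduces to combining these two facts.

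The one point that genuinely needs a word is the continuity of the sections on the infinite-dimensional space. First I would observe that
$$
Q_{\nu\nu}(x,y)=\sum_{m=1}^\nu\sum_{n=1}^\nu a_{mn}x_my_n
$$
is a polynomial in the finitely many variables $x_1,\dots,x_\nu,y_1,\dots,y_\nu$, hence a continuous function of those coordinates on $D^\nu\times D^\nu$. Because $Q_{\nu\nu}$ depends only on these finitely many coordinates and the coordinate projections $\mathcal{S}^2\to D^\nu\times D^\nu$ are continuous for the product topology, $Q_{\nu\nu}$ is continuous on $\mathcal{S}^2$ in the sense of Bohr's definition: given $(x^{(0)},y^{(0)})$ and $\varepsilon>0$, it suffices to control the first $\nu$ components to make $\bigl|Q_{\nu\nu}(x,y)-Q_{\nu\nu}(x^{(0)},y^{(0)})\bigr|$ small. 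This is the same phenomenon exploited in Lemma~\ref{lema14}.

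With both pieces available I would fix $(x^{(0)},y^{(0)})\in\mathcal{S}^2$ and $\varepsilon>0$, choose $\nu\ge\nu_1(\varepsilon/3)$ so that $\bigl|Q-Q_{\nu\nu}\bigr|<\varepsilon/3$ everywhere on $\mathcal{S}^2$, and then pick $\delta>0$ and take $m=\nu$ so that $\bigl|x_j-x_j^{(0)}\bigr|<\delta$, $\bigl|y_j-y_j^{(0)}\bigr|<\delta$ for $j=1,\dots,\nu$ forces $\bigl|Q_{\nu\nu}(x,y)-Q_{\nu\nu}(x^{(0)},y^{(0)})\bigr|<\varepsilon/3$. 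The triangle inequality
$$
\bigl|Q(x,y)-Q(x^{(0)},y^{(0)})\bigr|\le\bigl|Q(x,y)-Q_{\nu\nu}(x,y)\bigr|+\bigl|Q_{\nu\nu}(x,y)-Q_{\nu\nu}(x^{(0)},y^{(0)})\bigr|+\bigl|Q_{\nu\nu}(x^{(0)},y^{(0)})-Q(x^{(0)},y^{(0)})\bigr|
$$
then gives $\bigl|Q(x,y)-Q(x^{(0)},y^{(0)})\bigr|<\varepsilon$, establishing continuity at the arbitrary point $(x^{(0)},y^{(0)})$. I do not expect any real obstacle here: the uniform convergence supplied by Theorem~\ref{teorlittle} does all the analytic work, and the only subtlety is making sure continuity is read in the product (Bohr) sense, where controlling finitely many coordinates suffices precisely because each section involves only finitely many variables.
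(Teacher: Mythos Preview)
Your proposal is correct and follows essentially the same route as the paper: both use the uniform convergence of the sections $Q_{\nu\nu}$ to $Q$ supplied by Theorem~\ref{teorlittle}, the continuity of each finite section (as a polynomial depending only on finitely many coordinates), and the standard $\varepsilon/3$ triangle-inequality splitting. The paper's proof is written out in exactly this form, with $\nu_1$ playing the role of your $\nu$.
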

\begin{proof}
Fix $(x_0,y_0)\in\mathcal{S}^2$ and $\varepsilon>0$. 
First, as just said above, there exists $\nu_1(\varepsilon)$ such that
$$
\bigl|Q(x,y)-Q_{\nu\nu}(x,y)\bigr|<\frac\varepsilon3 \quad \forall(x,y)\in\mathcal{S}^2
$$
if $\nu\ge\nu_1$. On the other hand, the bilinear form defined on $D^{\nu_1}\times D^{\nu_1}$ by 
$$
Q_{\nu_1\nu_1}\bigl(x^{(\nu_1)},y^{(\nu_1)}\bigr)=
\sum_{m=1}^{\nu_1}\sum_{n=1}^{\nu_1}a_{mn}x_my_n
$$
is continuous at $\bigl(x_0^{(\nu_1)},y_0^{(\nu_1)}\bigr)$. Thus, 
there exists 
$\delta>0$ (depending on $(x_0,y_0)$ and $\varepsilon$) such that, if 
$\max_{1\le j\le\nu_1}\bigl\{|x_j-x_{0j}|,|y_j-y_{0j}|\bigr\}<\delta$, then
$$
\bigl| Q_{\nu_1\nu_1}\bigl(x^{(\nu_1)},y^{(\nu_1)}\bigr)-Q_{\nu_1\nu_1}\bigl(x_0^{(\nu_1)},y_0^{(\nu_1)}\bigr) \bigr|
<\frac\varepsilon3.
$$

Consequently,  for every $(x,y)\in\mathcal{S}^2$ with $\max\bigl\{|x_j-x_{0j}|,|y_j-y_{0j}|\bigr\}<\delta$ for $j=1,\ldots,\nu_1$, we have
\begin{align*}
\bigl|Q(x,y)-Q(x_0,y_0)\bigr|&\le
\bigl|Q(x,y)-Q_{\nu_1\nu_1}(x,y)\bigr|
+\bigl|Q_{\nu_1\nu_1}(x,y)-Q_{\nu_1\nu_1}(x_0,y_0)\bigr|\\
&\quad +\bigl|Q_{\nu_1\nu_1}(x_0,y_0)-Q(x_0,y_0)\bigr|\\
&<\frac\varepsilon3+\frac\varepsilon3+\frac\varepsilon3=\varepsilon
\end{align*}
(we have used that $Q_{\nu_1\nu_1}(x,y)=Q_{\nu_1\nu_1}\bigl(x^{(\nu_1)},y^{(\nu_1)}\bigr)$ for all $(x,y)\in\mathcal{S}^2$), and the continuity of $Q(x,y)$ at $(x_0,y_0)$ is proved.
\end{proof}

\noindent\textbf{Some more definitions and remarks.}
Let $Q(x,y)=\sum_{m,n=1}^\infty a_{mn}x_my_n$ be a bilinear form completely bounded by a constant $H$  in $\mathcal{S}$, and which defines, according to Corollary~\ref{corolittle}, a continuous function on $\mathcal{S}^2$. Let 
$$
C(x):=Q(x,x)=\sum_{m,n=1}^\infty a_{mn}x_mx_n
$$
for $x\in\mathcal{S}$. The quadratic form $C(x)$ is also called \emph{completely bounded in $\mathcal{S}$}, because
$$
|C_M(x)|=\biggl|\sum_{m=1}^M\sum_{n=1}^M a_{mn}x_mx_n\biggr|\le H \quad\forall x\in\mathcal{S},\ \forall M\in\mathbb{N}.
$$ 

When the bilinear form $Q(x,y)$ is completely bounded in $\mathcal{S}$, its \emph{partial derivatives} are well defined (see \cite[p. 128]{hilbert1912}). Writing, for each $p\in\mathbb{N}$, $e_p=(\delta_{pn})_{n=1}^\infty\in\mathcal{S}$, and applying \eqref{bilinealidad}, we have
\begin{align*}
\frac{\partial Q}{\partial y_p}(x,y)&=\lim_{t\to 0}\frac{Q(x,y+t e_p)-Q(x,y)}t=Q(x,e_p)=\sum_{m=1}^\infty a_{mp}x_m,
\\
\frac{\partial Q}{\partial x_p}(x,y)&=\lim_{t\to 0}\frac{Q(x+t e_p,y)-Q(x,y)}t=Q(e_p,y)=\sum_{n=1}^\infty a_{pn}x_n,
\end{align*}
and thus these partial derivatives are bounded linear forms. 
According to Lemma~\ref{stepI}, the series
$\sum_{n=1}^\infty \abs{a_{pn}}$ and $\sum_{m=1}^\infty \abs{a_{mp}}$ are convergent
for all $p$, and hence one can deduce that $\frac{\partial Q}{\partial x_p}(x,y)$ and $\frac{\partial Q}{\partial y_p}(x,y)$ are continuous on $\mathcal{S}^2$ by applying a result (on $\mathcal{S}^2$) analogous to Lemma  \ref{lema14} (in $\mathbb{T}^\omega$).

\begin{corol}\label{cuadratica}
\emph{(a)} If the bilinear form $Q(x,y)$ is completely bounded on $\mathcal{S}$, then
the quadratic form $C(x)=Q(x,x)$ belongs to the class $C^{(\infty}(\mathcal{S})$. 

\emph{(b)} If the quadratic form $C(x)=Q(x,x)$ is completely bounded in $\mathcal{S}$, then it belongs to the class $C^{(\infty}(\mathcal{S})$. 
\end{corol}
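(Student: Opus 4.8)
The plan is to prove part (a) by direct differentiation and then to deduce part (b) from part (a) through a polarization argument that upgrades complete boundedness of the quadratic form to complete boundedness of its symmetric bilinear companion.

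For part (a), I would first record that $C\in C^{(0}(\mathcal{S})$: by Corollary~\ref{corolittle} the completely bounded form $Q$ is continuous on $\mathcal{S}^2$, and composing with the continuous diagonal map $x\mapsto (x,x)$ gives continuity of $C$. To differentiate, I would expand, using the bilinearity \eqref{bilinealidad},
$$
C(x+te_p)=Q(x,x)+t\bigl(Q(e_p,x)+Q(x,e_p)\bigr)+t^2Q(e_p,e_p),
$$
so that
$$
\frac{\partial C}{\partial x_p}(x)=Q(e_p,x)+Q(x,e_p)=\sum_{n=1}^\infty(a_{pn}+a_{np})x_n.
$$
By Lemma~\ref{stepI} the sequence $(a_{pn}+a_{np})_n$ is absolutely summable, so the $\mathcal{S}$-analogue of Lemma~\ref{lema14} (with $\varphi$ the identity) shows each first-order partial is continuous. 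The higher orders then collapse: one more differentiation gives the constant $\partial^2 C/\partial x_q\partial x_p=a_{pq}+a_{qp}$, and every partial of order at least $3$ vanishes identically. Hence $D^\alpha C\in C^{(0}(\mathcal{S})$ for every finitely nonzero $\alpha$, i.e.\ $C\in C^{(\infty}(\mathcal{S})$.

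For part (b), I would reduce to (a). First symmetrize by setting $b_{mn}=\tfrac12(a_{mn}+a_{nm})$, which leaves each section unchanged, $\sum_{m,n=1}^M b_{mn}x_mx_n=C_M(x)$, and makes $B(x,y)=\sum b_{mn}x_my_n$ symmetric, $B(x,y)=B(y,x)$, with $B(x,x)=C(x)$. Its square sections satisfy the polarization identity $B_{MM}(x,y)=\tfrac14\bigl(C_M(x+y)-C_M(x-y)\bigr)$. Since $C_M$ is homogeneous of degree $2$ and $\tfrac12(x\pm y)\in\mathcal{S}$ whenever $x,y\in\mathcal{S}$, complete boundedness of $C$ by $H$ gives $\abs{C_M(x\pm y)}=4\,\abs{C_M(\tfrac12(x\pm y))}\le 4H$, whence $\abs{B_{MM}(x,y)}\le 2H$. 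A rectangular section $B_{MN}$ equals a square section of the vectors obtained by zero-padding $x^{(M)}$ and $y^{(N)}$, which still lie in $\mathcal{S}$, so $\abs{B_{MN}(x,y)}\le 2H$ as well and $B$ is completely bounded; part (a) applied to $B$ then yields $C=B(x,x)\in C^{(\infty}(\mathcal{S})$.

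The step I expect to be the crux is this polarization-and-scaling passage: one controls only the diagonal sections, and the reduction rests on the two elementary but essential facts that $\tfrac12(x\pm y)\in\mathcal{S}$ for $x,y\in\mathcal{S}$ and that the sections are degree-$2$ homogeneous. Once $B$ is known completely bounded, part (a) is routine, since the derivatives of a quadratic form are linear (order $1$), constant (order $2$), or zero (higher order).
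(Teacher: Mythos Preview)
Your proposal is correct and follows essentially the same route as the paper: part (a) via Corollary~\ref{corolittle}, the derivative formula $\partial C/\partial x_p=\sum_j(a_{pj}+a_{jp})x_j$, Lemma~\ref{stepI} for absolute summability, and the $\mathcal{S}$-analogue of Lemma~\ref{lema14}; part (b) via the polarization identity together with the key observation that $\tfrac12(x\pm y)\in\mathcal{S}$. Your explicit symmetrization and zero-padding for rectangular sections are minor bookkeeping refinements over the paper's somewhat terser statement, but the underlying argument is the same.
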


\begin{proof} (a) The quadratic form $C(x)=Q(x,x)$ is continuous in $\mathcal{S}$ according to Corollary \ref{corolittle}. For each $p\in\mathbb{N}$ we have
\begin{align*}
\frac{\partial C}{\partial x_p}(x)&=\frac{\partial Q}{\partial x_p}(x,x)+\frac{\partial Q}{\partial y_p}(x,x)=\sum_{n=1}^\infty a_{pn}x_n+\sum_{m=1}^\infty a_{mp}x_m\\
&=\sum_{j=1}^\infty (a_{pj}+a_{jp})x_j
\end{align*}
due to the absolute convergence of each series. Then, by applying Lemma~\ref{lema14}, the linear form $\frac{\partial C}{\partial x_p}(x)$ is continuous  on $\mathcal{S}$, and its partial derivatives are constant functions.

(b) From the identity
$$
Q(x,y)=Q\bigl(\tfrac12(x+y), \tfrac12(x+y)\bigr)-Q\bigl(\tfrac12(x-y), \tfrac12(x-y)\bigr)
$$
it follows that the bilinear form $Q(x,y)$ is completely bounded on $\mathcal{S}$. Then apply part (a).
\end{proof}

\section{Functions in $C^{(\infty}(\mathbb{T}^\omega)$ whose Fourier series diverges absolutely} 
\label{sec:diverge}

In this section, we prove Theorem \ref{teor:main}. In 1913, Toeplitz \cite[p. 427]{toeplitz1913} introduced a quadratic form  
\begin{equation}
\label{eq:quad}
C(z)=\sum_{m,n=1}^\infty a_{mn}z_mz_n \quad (z\in\mathcal{S})
\end{equation} 
in infinitely many variables, symmetric (i.e., $a_{mn}=a_{nm}$), completely bounded on $\mathcal{S}$ in the above sense, and such that the series $\sum_{m,n=1}^\infty |a_{mn}|$ diverges. This quadratic form will be described below. We will simply replace 
$z=e^{2\pi ix}$ (i.e., $z_j=e^{2\pi ix_j}$ for all $j$) with $x\in\mathbb{T}^\omega$ in Toeplitz's form, and consider the function
\begin{equation}\label{Ftoeplitz}
F(x)=C(e^{2\pi ix})=\sum_{m,n=1}^\infty a_{mn}e^{2\pi i (x_m+x_n)},\quad x=(x_j)_{j=1}^\infty\in\mathbb{T}^\omega.
\end{equation}
From Corollary \ref{cuadratica} (b) it follows that $F\in C^{(\infty}(\mathbb{T}^\omega)$. In particular, $F$ is integrable.

Let us now calculate the Fourier coefficients of $F$. We will use $F(x)=\lim_{M\to\infty}F_M(x)$, where
$$F_M(x)=\sum_{m,n=1,\ldots,M} a_{mn}e^{2\pi i (x_m+x_n)}.
$$
Since the quadratic form \eqref{eq:quad} is completely bounded on $\mathcal{S}$, i.e., $|C(z)|\le H$ for all $z\in \mathcal{S}$, we have $\abs{F_M(x)}\le H$  for all $M\in\mathbb{N}$ and $x\in\mathbb{T}^\omega$. This allows to apply Vitali's convergence theorem to write, for any $\bar{p}\in\mathbb{Z}^\infty$ fixed,

\begin{align*}
\widehat{F}(\bar{p})&=\int_{\mathbb{T}^\omega}\Bigl(\sum_{m,n=1}^\infty a_{mn}e^{2\pi i(x_m+x_n)}  \Bigr)e^{-2\pi i \bar{p}\cdot x}\,dx\\
&=\int_{\mathbb{T}^\omega}\Bigl(\lim_{M\to\infty}\sum_{m,n=1,\ldots,M} a_{mn}e^{2\pi i(x_m+x_n)}  \Bigr)e^{-2\pi i \bar{p}\cdot x}\,dx\\
&=\int_{\mathbb{T}^\omega}\lim_{M\to\infty}\Bigl(\sum_{m,n=1,\ldots,M} a_{mn}e^{2\pi i(x_m+x_n)}e^{-2\pi i \bar{p}\cdot x}  \Bigr)\,dx\\
&=\lim_{M\to\infty}\sum_{m,n=1,\ldots,M} a_{mn}\int_{\mathbb{T}^\omega}e^{2\pi i((x_m+x_n)-\bar{p}\cdot x)}\,dx
\end{align*}

\begin{align*}
\phantom{\widehat{F}(\bar{p})}&\overset{\footnotemark}{=}\sum_{m,n=1}^\infty a_{mn }\int_{\mathbb{T}^\omega}e^{2\pi i((x_m+x_n)-\bar{p}\cdot x)}\,dx\\
&=
\begin{cases}
a_{mn}+a_{nm}=2a_{mn}&\text{if $\bar{p}=\bar{e}_m+\bar{e}_n$, $m\ne n$}\\
a_{mm}&\text{if $\bar{p}=2\bar{e}_m$,}\\ 0&\text{otherwise,}  
\end{cases}
\end{align*}
where we denote by $\bar{e}_q$  the element $(\delta_{qj})_{j=1}^\infty$ belonging to $\mathbb{Z}^\infty$.

\footnotetext{In fact, denoting by $M_0$ the \emph{greatest  nonzero index} of $\bar{p}$, i.e., $p_j=0$ for all $j>M_0$, we have 
$$
\sum_{m,n=1}^\infty a_{mn }\int_{\mathbb{T}^\omega}e^{2\pi i((x_m+x_n)-\bar{p}\cdot x)}\,dx
=\sum_{m,n=1,\ldots,M_0} a_{mn}\int_{\mathbb{T}^\omega}e^{2\pi i((x_m+x_n)-\bar{p}\cdot x)}\,dx.
$$}

Thus, the expression \eqref{Ftoeplitz}, which defines $F(x)$, is indeed its Fourier series,
$\sum_{\bar{p}\in\mathbb{Z}^\infty} \widehat{F}(\bar{p})e^{2\pi i\bar{p}\cdot x}$.
Therefore 
$$
\sum_{\bar{p}\in\mathbb{Z}^\infty}\bigl|\widehat{F}(\bar{p})\bigr|=\sum_{m,n=1}^\infty \abs{a_{mn}},
$$
and since $\sum_{m,n=1}^\infty \abs{a_{mn}}=\infty$, our function $F$ is a counterexample showing that the implication  $f\in C^{(\infty}(\mathbb{T}^\omega)\Longrightarrow  \sum_{\bar{p}\in\mathbb{Z}^\infty}|\widehat{f}(\bar{p})|<\infty$ is false.

Let us proceed to describe the quadratic form $C(z)$. We first show an auxiliary lemma.  Toeplitz \cite[p. 423-426]{toeplitz1913} gave it for real orthogonal matrices. In what follows, $D$ denotes the closed unit disc of the complex plane.

\begin{lema}(Littlewood, \cite[p. 171]{little1930}; see also \cite[p. 609]{bohnen1931}.) \label{formaunit}
Let $A=(a_{mn})_{N\times N}$ be a unitary matrix, i.e., 
$$
\sum_{n=1}^N a_{rn}\overline{a_{sn}}=\delta_{rs} \quad \forall r,s=1,\ldots, N,
$$
 and define $Q_{NN}(x):=N^{-1}\sum_{m,n=1}^N a_{mn}x_mx_n$ for $x\in D^N$. Then,  
$$|Q_{NN}(x)|\le 1\quad \forall x\in D^N.$$
\end{lema}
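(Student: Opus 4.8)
The plan is to prove the bound $|Q_{NN}(x)|\le 1$ for all $x\in D^N$ by relating the quadratic form $Q_{NN}(x)=N^{-1}\sum_{m,n=1}^N a_{mn}x_mx_n$ to the Hilbert--Schmidt structure of the unitary matrix $A=(a_{mn})$. The key observation is that, since $A$ is unitary, its rows (and columns) form an orthonormal system in $\mathbb{C}^N$, so each row has Euclidean norm $1$. First I would write $x=(x_1,\dots,x_N)\in D^N$, meaning $|x_m|\le 1$ for every $m$, and try to estimate the bilinear expression directly. The natural first step is to rewrite $\sum_{m,n} a_{mn}x_mx_n=\sum_{m} x_m\bigl(\sum_n a_{mn}x_n\bigr)$ and apply Cauchy--Schwarz in the outer index $m$.

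The central calculation then proceeds in two applications of Cauchy--Schwarz. Writing $v_m:=\sum_{n=1}^N a_{mn}x_n$ for the entries of $Ax$ (viewing $x$ as a column vector), we have
\begin{equation*}
\Bigl|\sum_{m=1}^N a_{mn}x_mx_n\Bigr|=\Bigl|\sum_{m=1}^N x_m v_m\Bigr|\le\Bigl(\sum_{m=1}^N |x_m|^2\Bigr)^{1/2}\Bigl(\sum_{m=1}^N |v_m|^2\Bigr)^{1/2}.
\end{equation*}
The first factor is at most $N^{1/2}$ because each $|x_m|\le 1$. For the second factor, unitarity of $A$ implies that the map $x\mapsto Ax$ is an isometry on $\mathbb{C}^N$, so $\sum_m |v_m|^2=\|Ax\|_2^2=\|x\|_2^2=\sum_m |x_m|^2\le N$, whence $\bigl(\sum_m |v_m|^2\bigr)^{1/2}\le N^{1/2}$. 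Combining the two factors gives $\bigl|\sum_{m,n} a_{mn}x_mx_n\bigr|\le N^{1/2}\cdot N^{1/2}=N$, and dividing by $N$ yields $|Q_{NN}(x)|\le 1$, as desired.

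The step that requires the most care is the use of unitarity to control $\|Ax\|_2$. The hypothesis is stated as $\sum_{n} a_{rn}\overline{a_{sn}}=\delta_{rs}$, which is orthonormality of the \emph{rows}; I would note that this is equivalent to $AA^{*}=I$, hence to $A^{*}A=I$ for a square matrix, so that $A$ is a genuine linear isometry and $\|Ax\|_2=\|x\|_2$ holds for every $x$. One must be attentive that no complex conjugation on $x$ appears in the form $\sum a_{mn}x_mx_n$ (it is a genuine quadratic, not Hermitian, form), so the isometry statement is applied only to the inner sum $v=Ax$ and not to the outer pairing; the outer pairing is handled purely by the crude bound $|x_m|\le 1$. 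I expect the main conceptual obstacle to be precisely this bookkeeping—keeping the two roles of $x$ (once paired without conjugate against $Ax$, once bounded entrywise) separate—rather than any analytic difficulty, since the whole argument is a finite-dimensional Cauchy--Schwarz estimate once the isometry is in hand.
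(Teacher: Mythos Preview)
Your argument is correct: write the form as $\sum_m x_m (Ax)_m$, apply Cauchy--Schwarz, bound $\|x\|_2\le N^{1/2}$ from $|x_m|\le 1$, and use that $AA^{*}=I\Rightarrow A^{*}A=I$ for square matrices to get $\|Ax\|_2=\|x\|_2\le N^{1/2}$. (There is a harmless typo in your displayed line: the left-hand side should carry the double sum $\sum_{m,n=1}^N$.)

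The paper does not supply its own proof of this lemma; it simply cites Littlewood \cite[p.~171]{little1930} and Bohnenblust--Hille \cite[p.~609]{bohnen1931}. Your proof is exactly the classical one given in those references, so there is nothing to compare beyond noting that you have reproduced the intended argument.
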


\noindent\textbf{Toeplitz's quadratic form.}
Toeplitz begins by defining  $C_1(z_1,\ldots,z_4)$ as the quadratic form in $D^4$ whose coefficient matrix is
$$
C_1=\begin{pmatrix}-1&1&1&1\\1&-1&1&1\\1&1&-1&1\\1&1&1&-1 \end{pmatrix}.
$$
The real symmetric matrix $C_1$ verifies $C_1^2=4I$, and so, by Lemma ~\ref{formaunit},
$$\big|C_1(z_1,\ldots,z_4)\big|\le 4^{3/2}=8$$ in $D^4$ (this maximum value is attained for $z_1=\ldots=z_4=1$). 

Next, he defines
$C_2(z_{1},\ldots,z_{4^2})$ as the quadratic form in $D^{4^2}$ whose coefficient matrix is
$$
C_2=\begin{pmatrix}-C_1&C_1&C_1&C_1\\C_1&-C_1&C_1&C_1\\C_1&C_1&-C_1&C_1\\C_1&C_1&C_1&-C_1 \end{pmatrix}.
$$
Lemma \ref{formaunit} yields $$\big|C_2(z_{1},\ldots,z_{4^2})\big|\le (4^2)^{3/2}= 8^2$$ in $D^{4^2}$ (and the maximum modulus is attained for $z_1=\ldots=z_{4^2}=1$). 

Inductively, from the quadratic form in $4^\alpha$  variables ($\alpha\ge 1$) with matrix $C_\alpha$,  one can construct the quadratic form in $4^{\alpha+1}$ variables with matrix
$$
C_{\alpha+1}=\begin{pmatrix}-C_\alpha&C_\alpha&C_\alpha&C_\alpha\\
C_\alpha&-C_\alpha&C_\alpha&C_\alpha\\C_\alpha&C_\alpha&-C_\alpha&C_\alpha\\
C_\alpha&C_\alpha&C_\alpha&-C_\alpha \end{pmatrix}.
$$
According to Lemma \ref{formaunit} we have that, for all $\alpha\in\mathbb{N}$, 
$$\big|C_\alpha(z_{1},\ldots,z_{4^\alpha})\big|\le (4^\alpha)^{3/2}=8^\alpha$$
in $D^{4^\alpha}$.
Finally, for $x\in\mathcal{S}$, Toeplitz defines
\begin{align}\label{formatoe}
C(x)=\frac{\mu_1}8&C_1(x_1,\ldots,x_4)+\frac{\mu_2}{8^2}C_2(x_{4+1},\ldots,x_{4+4^2})
\\&+\frac{\mu_3}{8^3}C_3(x_{4^2+4+1},\ldots,x_{4^2+4+4^3})+\cdots\notag
\end{align}
where $(\mu_\alpha)_{\alpha=1}^\infty$ is a sequence of positive numbers determined below, and he shows the following lemma (see \cite[p. 426-427]{toeplitz1913}):
\begin{lema}\label{auxiltoe} If $\mu_\alpha>0$ are chosen so that the series $\sum\mu_\alpha$ is convergent, then the quadratic form \eqref{formatoe} is completely bounded on $\mathcal{S}$.
\end{lema}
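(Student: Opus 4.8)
The plan is to verify the definition of complete boundedness for the quadratic form directly: I must exhibit a constant $H$ with $\bigl|\sum_{m,n=1}^M a_{mn}x_mx_n\bigr|\le H$ for every $x\in\mathcal{S}$ and every $M\in\mathbb{N}$, where $(a_{mn})$ is the coefficient matrix of \eqref{formatoe}. The natural candidate is $H=\sum_{\alpha=1}^\infty\mu_\alpha$, which is finite by hypothesis. The essential structural feature I would exploit is that this matrix is block diagonal: its $\alpha$-th diagonal block is $\tfrac{\mu_\alpha}{8^\alpha}C_\alpha$, a square block of size $4^\alpha$ acting on the group of variables assigned to $C_\alpha$, while $a_{mn}=0$ whenever $m$ and $n$ lie in different groups. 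Writing $b_\beta=4+4^2+\cdots+4^\beta$ for the last index of the $\beta$-th group (with $b_0=0$), the groups partition $\mathbb{N}$ into consecutive blocks $\{b_{\alpha-1}+1,\ldots,b_\alpha\}$ of lengths $4^\alpha$.

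First I would fix $M$ and locate the block containing it, say $b_{\beta-1}<M\le b_\beta$. Since there are no cross-block terms, the square partial sum splits cleanly as
$$
\sum_{m,n=1}^M a_{mn}x_mx_n=\sum_{\alpha=1}^{\beta-1}\frac{\mu_\alpha}{8^\alpha}C_\alpha\bigl(x_{b_{\alpha-1}+1},\ldots,x_{b_\alpha}\bigr)+\frac{\mu_\beta}{8^\beta}P,
$$
where the first $\beta-1$ terms are \emph{complete} blocks and $P$ denotes the contribution of the $\beta$-th block truncated to the indices $b_{\beta-1}+1,\ldots,M$. For each complete block the arguments form a point of $D^{4^\alpha}$, so the bound $\bigl|C_\alpha\bigr|\le(4^\alpha)^{3/2}=8^\alpha$ established via Lemma~\ref{formaunit} gives $\bigl|\tfrac{\mu_\alpha}{8^\alpha}C_\alpha(\cdots)\bigr|\le\tfrac{\mu_\alpha}{8^\alpha}\cdot 8^\alpha=\mu_\alpha$.

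The one delicate point, and the step I expect to be the real obstacle, is the incomplete block $P$, because Lemma~\ref{formaunit} is stated only for the full quadratic form $C_\beta$ on all $4^\beta$ variables, not for a principal truncation of it. The resolution is a zero-padding observation: writing $w_j=x_{b_{\beta-1}+j}$, the truncated sum $P=\sum_{m,n=1}^{M-b_{\beta-1}}(C_\beta)_{mn}w_mw_n$ equals $C_\beta\bigl(w_1,\ldots,w_{M-b_{\beta-1}},0,\ldots,0\bigr)$, since setting the remaining variables to $0$ annihilates exactly the omitted terms. As this padded vector still lies in $D^{4^\beta}$, Lemma~\ref{formaunit} applies verbatim and yields $|P|\le 8^\beta$, hence $\bigl|\tfrac{\mu_\beta}{8^\beta}P\bigr|\le\mu_\beta$. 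Combining the two estimates, $\bigl|\sum_{m,n=1}^M a_{mn}x_mx_n\bigr|\le\sum_{\alpha=1}^\beta\mu_\alpha\le\sum_{\alpha=1}^\infty\mu_\alpha=H$, uniformly in $M$ and $x$, which is precisely the complete boundedness of \eqref{formatoe} on $\mathcal{S}$.
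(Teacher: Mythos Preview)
Your argument is correct. The paper itself does not supply a proof of this lemma, merely citing Toeplitz's original 1913 paper; your block-diagonal decomposition together with the zero-padding trick for the incomplete final block is exactly the standard (and presumably Toeplitz's) route to the bound $H=\sum_\alpha\mu_\alpha$.
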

Moreover, the sum of the moduli of all coefficients of the form $C(x)$ is
$\sum 2^\alpha\mu_\alpha$. It is easy to choose $\mu_\alpha$ so that $\sum\mu_\alpha<\infty$ and $\sum 2^\alpha\mu_\alpha=\infty$  (for example, $\mu_\alpha=\frac1{\alpha^2}$, $\mu_\alpha=2^{-\alpha}$, etc.). 
Thus, the function 
$$
F(x)=C(e^{2\pi i x})\quad (x\in\mathbb{T}^\omega)
$$
constructed with these $\mu_\alpha$ is our first announced counterexample.
 
\smallskip
\noindent\textbf{Littlewood's quadratic forms.}
From \cite[p. 171-173]{little1930} and \cite[p. 609-612]{bohnen1931} we can get a variety of counterexamples that generalize the preceding one, based on quadratic forms on
$\mathcal{S}$ for which not all the coefficients are real.

For example, let $N>2$ be a fixed integer, and consider the infinite collection of matrices
\begin{align*}
M_1&=\left(e^{2\pi i\frac{rs}N}\right)_{N\times N}, \quad r,s=1,\ldots,N,\\
M_\mu&=\left(e^{2\pi i\frac{rs}N}\cdot M_{\mu-1}\right)_{N^\mu\times N^\mu}, \quad r,s=1,\ldots,N, \quad \text{if $\mu> 1$}.
\end{align*}
All entries in $M_\mu$ are $N$th roots of unity, and $M_\mu$ is an unitary matrix for all $\mu\in\mathbb{N}$.
Let us denote by $M_\mu\bigl(x_1^{(\mu)},\ldots x_{N^\mu}^{(\mu)} \bigr)$ the quadratic form associated with the matrix $M_\mu$ and the variables of a generic point $x\in\mathcal{S}$ on which it acts, and then define the quadratic form in infinitely many variables
\begin{align*}
\begin{split}
M(x)&=N^{-3/2}M_1(x_1,\ldots,x_N)+\frac14 N^{-3}M_2(x_{N+1},\ldots,x_{N+N^2})
\\&\qquad +\frac19 N^{-9/2}M_3(x_{N+N^2+1},\ldots,x_{N+N^2+N^3})+\cdots
\end{split}\\
&=\sum_{\mu=1}^\infty \frac{N^{-3\mu/2}}{\mu^2}M_\mu\bigl(x_1^{(\mu)},\ldots x_{N^\mu}^{(\mu)} \bigr).
\end{align*}
According to Lemma \ref{formaunit} we have
$$
\Bigl|M_\mu\bigl(x_1^{(\mu)},\ldots x_{N^\mu}^{(\mu)} \bigr)\Bigr|\le N^{3\mu/2},
$$ 
so that
$$
|M(x)|\le\sum_{\mu=1}^\infty\frac1{\mu^2}<\infty.
$$
Thus, $M(x)$ is completely bounded and, by Corollary \ref{cuadratica} (b), it belongs to the class $C^{(\infty}$ on $\mathcal{S}$. But, if we denote 
$M(x)=\sum_{m,n=1}^\infty a_{mn}x_mx_n$, since all the moduli of the nonzero coefficients are equal to 1, we have
$$
\sum_{m,n=1}^\infty |a_{mn}|=N^{-3/2}\cdot N^2+\frac14 N^{-3}\cdot N^4+\frac19 N^{-9/2}\cdot N^6+\ldots=
\sum_{j=1}^\infty\frac{N^{j/2}}{j^2}=\infty
$$
and so the Fourier series of the function $G(x)=M(e^{2\pi i x})$, $x\in\mathbb{T}^\omega$, diverges absolutely. 

Bohnenblust and Hille \cite[p. 608-614]{bohnen1931} generalized the results of Littlewood to $m$-variate forms  ($m> 2$). This would provide new counterexamples, this time based on $m$-variate forms ($m> 2$) in infinitely many variables.

\section*{Acknowledgments}
The authors are grateful to Professor  A. D. Bendikov for the suggested ideas and the careful reading of the manuscript. 

Luz Roncal was supported by the Basque Government through the
BERC 2018--2021 program, by Spanish Ministry of Economy and Competitive\-ness MINECO through
BCAM Severo Ochoa excellence accreditation SEV-2013-0323, through project  MTM2015-65888-C04-4-P, and through project MTM2017-82160-C2-1-P funded by (AEI/FEDER, UE) and acronym \\``HAQMEC'', and by a 2017 Leonardo grant for Researchers and Cultural Creators, BBVA Foundation. The Foundation accepts no responsibility for the opinions, statements and contents included in the project and/or the results thereof, which are entirely the responsibility of the authors.

\end{document}